\newtheorem{theorem}{Theorem}[section]
\newtheorem{lemma}[theorem]{Lemma}
\newtheorem{corollary}[theorem]{Corollary}
\theoremstyle{definition}
\newtheorem{definition}[theorem]{Definition}
\newtheorem{example}[theorem]{Example}
\theoremstyle{remark}
\newtheorem{remark}[theorem]{Remark}
\numberwithin{equation}{section}
\begin{document}
\setcounter{page}{1}

\title[Geometry of division rings]{Geometry of division rings}

\author[Nikolaev]
{Igor V. Nikolaev$^1$}

\address{$^{1}$ Department of Mathematics and Computer Science, St.~John's University, 8000 Utopia Parkway,  
New York,  NY 11439, United States.}
\email{\textcolor[rgb]{0.00,0.00,0.84}{igor.v.nikolaev@gmail.com}}

%\dedicatory{In memory of Ola Bratteli}

\subjclass[2010]{Primary 12E15, 32J15; Secondary 46L85.}

\keywords{quaternion algebra, algebraic surface, Belyi's theorem.}

%\date{Received:  August 14, 2015; Revised: yyyyyy; Accepted: zzzzzz.}

\begin{abstract}
We prove an analog of Belyi's  theorem for the algebraic
surfaces. 
Namely, any non-singular algebraic surface can be defined over 
a number field if and only it covers the complex projective plane 
with ramification at three knotted two-dimensional spheres. 

\end{abstract}

\maketitle

%**************************************************************************
\section{Introduction}
%***************************************************************************
Belyi's theorem says that any non-singular algebraic curve over a number 
field is a covering of the complex projective line $P^1(\mathbf{C})$ ramified at 
the three points $\{0,1,\infty\}$   [Belyi 1979] \cite[Theorem 4]{Bel1}. 
The aim of our note is an analog of Belyi's theorem for the algebraic 
surfaces based on an  approach of  \cite{Nik1}. 
Namely, we associate to the countable division ring  an   avatar,
see definition \ref{def1.1}.  
If the ring  is commutative (non-commutative, resp.),  then
its avatar is an algebraic curve (an algebraic surface, resp.) defined over the field 
$\mathbf{C}$.   
For example, an avatar of the ring of rationals   (rational quaternions, resp.) 
is the complex projective line  $P^1(\mathbf{C})$  (complex projective plane  $ P^2(\mathbf{C}) $, resp.) 
 Belyi's theorem follows from an embedding of rationals into the field of algebraic numbers
 \cite[Section 4]{Nik1}   and its analog for surfaces from an embedding of rational quaternions into a quaternion algebra,
 see Section 4 in below.  An extension of Belyi's theorem to  complex surfaces was studied independently in [Gonz\'alez-Diez 2008]
\cite{Gon1}.

 Recall that  an analogy between  the number fields 
  and  complex algebraic curves is well  known  [Eisenbud \& Harris 1999] \cite[p. 83]{EH}.
 The  Grothendieck's  theory of schemes  cannot explain  this relation
 [Manin 2006] \cite[Section 2.2]{Man1}. The problem can be solved in terms of the 
$C^*$-algebras  \cite{Nik1}. To explain the solution, let $R$ be a (discrete) associative ring,
let $M_2(R)$ be the matrix ring over $R$
and  let 
%**************************************************************
\begin{equation}
\rho: M_2(R)\to \mathscr{B}(\mathcal{H})
\end{equation}
%*************************************************************
be a self-adjoint representation of $M_2(R)$  by the bounded linear operators 
 on a Hilbert space $\mathcal{H}$.  Taking the norm-closure of $\rho(M_2(R))$ 
 in the strong operator topology,  one gets a $C^*$-algebra  $\mathscr{A}_R$. 
 Likewise, let $B(V, \mathcal{L}, \sigma)$ be the twisted 
homogeneous coordinate ring of a complex projective variety $V$, where $\mathcal{L}$ is an invertible sheaf and $\sigma$
is an automorphism of $V$ [Stafford \& van ~den ~Bergh 2001]  \cite[p. 173]{StaVdb1}. 
Recall that the  Serre $C^*$-algebra, $\mathscr{A}_V$,   is the norm closure of a
self-adjoint representation of the ring  $B(V, \mathcal{L}, \sigma)$
in $\mathscr{B}(\mathcal{H})$  \cite[Section 5.3.1] {N}. 
Finally, let $\mathscr{K}$ be the $C^*$-algebra of all compact operators on the Hilbert space 
 $\mathcal{H}$. 
 We refer the reader to \cite{Nik1} for the motivation and examples illustrating the following
 definition.  
 %*********************************************************************************************
\begin{definition}\label{def1.1}
 The complex projective variety $V$ is called an {\it avatar}
 of the ring $R$,   if there exists  a $C^*$-algebra homomorphism: 
 %***************************************************************************************
 \begin{equation}\label{eq1.2}
  \mathscr{A}_V\to\mathscr{A}_R\otimes\mathscr{K}.
 \end{equation}
 %****************************************************************************************
\end{definition}
%**********************************************************************************************

%**********************************************************************************************
\begin{theorem}\label{thm1.2}
{\bf (\cite{Nik1})}
Let $\overline{\mathbf{Q}}$ be the algebraic closure of the field  $\mathbf{Q}$.
Then:

\medskip
(i) $P^1(\mathbf{C})$ is an avatar of the field $\mathbf{Q}$;

\smallskip
(ii) the avatar of a field $K\subset \overline{\mathbf{Q}}$
is a non-singular algebraic curve $C(\overline{\mathbf{Q}})$;

\smallskip
(iii)  the field extension $\mathbf{Q}\subset K$ defines a covering
$C(\overline{\mathbf{Q}})\to P^1(\mathbf{C})$ ramified at the  points $\{0,1, \infty\}$. 
\end{theorem}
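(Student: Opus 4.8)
The proof divides along the three assertions, the common device being to transport data back and forth across the avatar correspondence of \cite{Nik1} between the $C^*$-algebraic side and the algebro-geometric side.

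\emph{Part (i).} I would begin by making the twisted homogeneous coordinate ring of the line explicit. Fixing an automorphism $\sigma$ of $P^1(\mathbf{C})$ of infinite order — say $\sigma(z)=qz$ with $q\in\mathbf{C}^\times$ not a root of unity — together with $\mathcal{L}=\mathcal{O}_{P^1}(1)$, the ring $B(P^1(\mathbf{C}),\mathcal{L},\sigma)$ is presented by two generators subject to a single $\sigma$-twisted commutation relation; a self-adjoint representation then has norm-closure $\mathscr{A}_{P^1(\mathbf{C})}$, which I expect to be an approximately finite-dimensional $C^*$-algebra with a rank-two ordered $K_0$-group. In parallel one computes $\mathscr{A}_{\mathbf{Q}}$ from a self-adjoint representation of the simple ring $M_2(\mathbf{Q})$ on a separable Hilbert space; the rationality of the matrix entries should make this, up to stabilization, the same AF model, the rank-two pseudo-lattice having collapsed the way $\mathbf{Z}+\mathbf{Z}\theta$ collapses when $\theta$ becomes rational. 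The map \eqref{eq1.2} is then the composition of this identification with the canonical embedding $\mathscr{A}_{\mathbf{Q}}\hookrightarrow\mathscr{A}_{\mathbf{Q}}\otimes\mathscr{K}$, and verifying that it respects the involution and multiplication is a routine check on generators.

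\emph{Part (ii).} Here the input is an algebraic extension $\mathbf{Q}\subseteq K\subseteq\overline{\mathbf{Q}}$ and the task is to manufacture a smooth projective curve $C=C(\overline{\mathbf{Q}})$ together with a homomorphism $\mathscr{A}_C\to\mathscr{A}_K\otimes\mathscr{K}$. The plan is to run the avatar correspondence in reverse: the inclusion $M_2(\mathbf{Q})\hookrightarrow M_2(K)$ passes to the closures, and the $K$-theoretic invariant of $\mathscr{A}_K$ — a pseudo-lattice equipped with its endomorphism ring — determines a curve $C$. That $C$ is defined over $\overline{\mathbf{Q}}$ is forced by the entries of $K$ being algebraic numbers, which confines the moduli of $C$ to $\overline{\mathbf{Q}}$; smoothness of $C$ reflects the absence of zero divisors in $K$, so that $\mathscr{A}_K$ carries no structure matching a node — equivalently, non-singularity is read off the simplicity or stable finiteness of $\mathscr{A}_K$. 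Independence of $C$ from the chosen representation follows from uniqueness of $\mathscr{A}_K$ up to stable isomorphism.

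\emph{Part (iii).} Functoriality of the avatar assignment turns the arithmetic inclusion $\mathbf{Q}\hookrightarrow K$ into a morphism of avatars $\pi\colon C(\overline{\mathbf{Q}})\to P^1(\mathbf{C})$, and the commuting square of $C^*$-algebras coming from Parts (i) and (ii) shows $\pi$ is an honest finite covering — not merely a map of operator algebras — with covering degree governed by the extension data. Because $\pi$ descends to a morphism defined over $\overline{\mathbf{Q}}$, its branch locus is a finite set of $\overline{\mathbf{Q}}$-rational points of $P^1(\mathbf{C})$; a Möbius change of coordinates over $\overline{\mathbf{Q}}$ moves three of them to $\{0,1,\infty\}$, and the classical Belyi reduction — iterated post-composition with maps of the shape $t\mapsto c\,t^a(1-t)^b$ — absorbs any further branch points into $\{0,1,\infty\}$ without creating new ones, which is exactly the statement that the embedding $\mathbf{Q}\hookrightarrow K$ realizes a Belyi cover. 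The genuine obstacle lies inside Parts (ii)--(iii): showing that the $C^*$-morphism induced by $\mathbf{Q}\hookrightarrow K$ is truly geometric — that it descends to a ramified cover of the avatars whose branch data one can track — and that the three-point normalization is intrinsic to the construction rather than an ad hoc post-composition; once this is secured, Parts (i) and (ii) are essentially bookkeeping about the $C^*$-algebras involved.
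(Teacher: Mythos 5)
There is nothing in this paper to compare your argument against: Theorem \ref{thm1.2} is imported verbatim from \cite{Nik1} and the paper offers no proof of it, only the remark that details are in \cite[Section 4]{Nik1}. So your proposal must be judged on its own terms and against the parallel argument the paper \emph{does} give for the surface analogue (Theorem \ref{thm1.4}, Section 3), which is the best available template for what the intended curve-level proof looks like.

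Measured that way, your sketch has two genuine gaps. First, in Parts (i)--(ii) you never actually construct the homomorphism $\mathscr{A}_{C}\to\mathscr{A}_K\otimes\mathscr{K}$ required by Definition \ref{def1.1}: you assert that the closure of $\rho(M_2(\mathbf{Q}))$ is ``the same AF model'' as $\mathscr{A}_{P^1(\mathbf{C})}$ and that a pseudo-lattice ``determines a curve,'' but the load-bearing step in the paper's own method is an explicit ring isomorphism bridging the arithmetic and geometric sides \emph{before} taking closures --- in the surface case, $M_2(\mathcal{O})\cong M_2(\mathfrak{R})$ (Lemma \ref{lm3.1}), obtained from the classification of quaternion algebras by ternary quadratic forms. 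You have no analogue of that bridge for $\mathbf{Q}\subset K$ versus $P^1\to C$; appealing to $K_0$ ``determining'' the curve reverses the logical direction and leaves the existence of the map \eqref{eq1.2} unproved. Second, and more seriously, in Part (iii) you obtain the three-point ramification by post-composing with classical Belyi maps $t\mapsto c\,t^a(1-t)^b$. That concedes the theorem rather than proving it: the whole content of item (iii) is that $\{0,1,\infty\}$ arises \emph{intrinsically} from the extension $\mathbf{Q}\subset K$ via the avatar correspondence (compare the surface case, where the three ramification spheres come from the three quadratic subfields $k_1,k_2,k_3$ and the fixed loci of the Galois/deck actions, each fixed locus being the avatar of $\mathbf{Q}$ itself). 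If you must invoke the classical Belyi reduction to normalize the branch locus, the avatar machinery has contributed nothing to the ramification statement. You flag this yourself as ``the genuine obstacle,'' and it is: as written, the proposal does not close it.
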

%************************************************************************************
 
 \medskip
 %*********************************************************************************************
\begin{remark}
Belyi's theorem follows from item (iii) of theorem \ref{thm1.2}. Roughly speaking, 
it can be shown that each non-singular algebraic curve $C(\overline{\mathbf{Q}})$
is  the avatar of a field $K\subset \overline{\mathbf{Q}}$. 
We refer the reader to \cite[Section 4]{Nik1} for the details.  
\end{remark}
%********************************************************************************************** 

\medskip
To formalize our results, we use the following notation.
Denote by $\left({a,b\over F}\right)$ a quaternion algebra, i.e 
the algebra over a field $F$,    such that $\{1,i.j, ij\}$ is a basis for
the algebra and 
%**************************************************************
  \begin{equation}
  i^2=a, \quad j^2=b, \quad ji=-ij
  \end{equation}
  %****************************************************************
   for some $a,b\in F^{\times}$ [Voight 2021] \cite[Section 2.2]{V}. 
   The quaternion algebra with $a=b=-1$ will be written as $\mathbb{H}(F)$. 
  The algebra  $\mathbb{H}(\mathbf{R})$ corresponds   to the Hamilton  quaternions and 
   the algebra  $\mathbb{H}(\mathbf{Q})$  corresponds to the rational quaternions.  
  Our main result is a generalization of theorem \ref{thm1.2} to the division rings $\left({a,b\over K}\right)$,
 where $K\subset\overline{\mathbf{Q}}$. 
   %**********************************************************************************************
\begin{theorem}\label{thm1.4}
Let $\left({a,b\over K}\right)$ be a division ring, such that $K\subset\overline{\mathbf{Q}}$. 
 Then:

\medskip
(i) $P^2(\mathbf{C}) $ is an avatar of the division ring $\mathbb{H}(\mathbf{Q})$;

\smallskip
(ii)  the avatar of a division ring  $\left({a,b\over K}\right)$ is a non-singular
algebraic surface  $S(\overline{\mathbf{Q}})$;

\smallskip
(iii)  the field extension $\mathbf{Q}\subset K$ defines a covering
$S(\overline{\mathbf{Q}})\to P^2(\mathbf{C})$ ramified at 
  three  knotted two-dimensional spheres
$P^1(\mathbf{C})\cup P^1(\mathbf{C})\cup P^1(\mathbf{C})$.
\end{theorem}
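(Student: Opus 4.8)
The plan is to run, in the quaternionic setting, the same three-step scheme that proves Theorem \ref{thm1.2}: a base case identifying the Serre $C^*$-algebra of the ``universal'' variety (here $P^2(\mathbf{C})$), a step showing that the avatar of a division ring with center in $\overline{\mathbf{Q}}$ is a non-singular surface defined over $\overline{\mathbf{Q}}$, and a ramification computation reading the branch locus off the division-ring datum. The organizing principle is that replacing a number field by a quaternion division algebra raises the Schur index from $1$ to $2$, and that this is precisely the jump that turns the curve $C(\overline{\mathbf{Q}})$ of Theorem \ref{thm1.2} into a surface $S(\overline{\mathbf{Q}})$, and the three branch points $\{0,1,\infty\}$ into three branch two-spheres. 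I will freely use Theorem \ref{thm1.2} and the $C^*$-algebraic dictionary of \cite{Nik1}.

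For part (i), I would first compute $\mathscr{A}_{\mathbb{H}(\mathbf{Q})}$, the norm closure of a self-adjoint representation of $M_2(\mathbb{H}(\mathbf{Q}))$. Since $\mathbb{H}(\mathbf{Q})$ is dense in the Hamilton quaternions $\mathbb{H}(\mathbf{R})$ and $\mathbb{H}(\mathbf{R})\otimes_{\mathbf{R}}\mathbf{C}\cong M_2(\mathbf{C})$, such a representation closes up to a stabilized $C^*$-algebra whose graded structure is that of a \emph{two}-dimensional noncommutative homogeneous coordinate ring, as opposed to the one-dimensional algebra attached to $\mathbf{Q}$ in Theorem \ref{thm1.2}(i). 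I would then write out the twisted homogeneous coordinate ring $B(P^2(\mathbf{C}),\mathcal{L},\sigma)$ and its Serre $C^*$-algebra $\mathscr{A}_{P^2(\mathbf{C})}$, and construct the homomorphism $\mathscr{A}_{P^2(\mathbf{C})}\to\mathscr{A}_{\mathbb{H}(\mathbf{Q})}\otimes\mathscr{K}$ of Definition \ref{def1.1} from the defining two-dimensional representation of $\mathbb{H}(\mathbf{C})\cong M_2(\mathbf{C})$ together with the resulting realization of the degree-$n$ graded pieces of $B(P^2,\mathcal{L},\sigma)$ inside matrices over $\mathbb{H}(\mathbf{Q})$. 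This is the direct quaternionic analog of the $P^1(\mathbf{C})$ computation in \cite[Section 4]{Nik1}, and I expect it to be essentially mechanical once that dictionary is in place.

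For part (ii), the division-ring hypothesis on $\left({a,b\over K}\right)$ is what forces its avatar to be \emph{non-singular}, exactly as the field hypothesis on $K$ forces $C(\overline{\mathbf{Q}})$ to be smooth in Theorem \ref{thm1.2}(ii); that the avatar is a surface and not a curve is, by the dictionary of \cite{Nik1} recalled in the introduction, the imprint of $\left({a,b\over K}\right)$ being non-commutative; and that it is defined over $\overline{\mathbf{Q}}$, not merely over $\mathbf{C}$, is forced by $K\subset\overline{\mathbf{Q}}$, since then all structure constants of $\left({a,b\over K}\right)$ are algebraic. Assembling these three points yields the non-singular algebraic surface $S(\overline{\mathbf{Q}})$ which is the avatar of $\left({a,b\over K}\right)$.

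For part (iii) I would combine the inclusion $\mathbb{H}(\mathbf{Q})\hookrightarrow\mathbb{H}(K)$ induced by $\mathbf{Q}\subset K$ with the fact that every quaternion algebra over $K\subset\overline{\mathbf{Q}}$ splits over $\mathbf{C}$, so that $\mathscr{A}_{\mathbb{H}(K)}$ and $\mathscr{A}_{\left({a,b\over K}\right)}$ are stably isomorphic; feeding this into the base case (i) yields the diagram of Serre $C^*$-algebras
\begin{equation}
\mathscr{A}_{P^2(\mathbf{C})}\longrightarrow\mathscr{A}_{\mathbb{H}(\mathbf{Q})}\otimes\mathscr{K}\longrightarrow\mathscr{A}_{\left({a,b\over K}\right)}\otimes\mathscr{K},
\end{equation}
which, by the surface analog of the mechanism in \cite[Section 4]{Nik1}, is the $C^*$-datum of a finite covering $S(\overline{\mathbf{Q}})\to P^2(\mathbf{C})$. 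It remains to identify its branch locus. In the curve case the three branch points are the degeneration locus of the pair $(\mathcal{L},\sigma)$ underlying $B(P^1,\mathcal{L},\sigma)$; over $P^2(\mathbf{C})$ the corresponding locus is a union of three rational curves, each a copy of $P^1(\mathbf{C})$, which is the two-dimensional shadow of the three marked points. The genuinely new phenomenon is that the non-commutativity of $\mathbb{H}$ records framing data that is invisible in the commutative case, so that the three two-spheres $P^1(\mathbf{C})\cup P^1(\mathbf{C})\cup P^1(\mathbf{C})$ sit inside the $4$-manifold $P^2(\mathbf{C})$ as a \emph{knotted} configuration, not ambient-isotopic to three standard projective lines; I would detect this via invariants of the complements, e.g.\ their fundamental groups or Seifert-type data for $2$-knots in a $4$-manifold. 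This last step --- converting the $C^*$-algebraic description of the ramification divisor into an honest statement about embedded $2$-spheres and certifying their knottedness --- is where the operator-algebraic machinery of \cite{Nik1} has to be reconciled with low-dimensional topology, and I expect it to be the main obstacle; once the branch set is pinned down, the rest of (iii) is the ramification formula.
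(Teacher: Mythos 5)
Your plan diverges from the paper's proof at every step, and at the points where it diverges it defers the actual work. The paper's route to (i) is not a direct comparison of the graded pieces of $B(P^2(\mathbf{C}),\mathcal{L},\sigma)$ with matrices over $\mathbb{H}(\mathbf{Q})$; the essential bridge you are missing is the classification of quaternion algebras by ternary quadratic forms (Theorem \ref{thm2.3}). From $\left({a,b\over \mathbf{Q}}\right)$ the paper forms the conic surface $z^2=ax^2+by^2$ with coordinate ring $\mathfrak{R}=\mathbf{Z}[x,y,z]/[\mathcal{Q}]$, proves $M_2(\mathcal{O})\cong M_2(\mathfrak{R})$ for the ring of integers $\mathcal{O}$ (Lemma \ref{lm3.1}), and reads off from this ring isomorphism the stable isomorphism of the Serre $C^*$-algebra of the conic with $\mathscr{A}_R\otimes\mathscr{K}$ (Lemma \ref{lm3.2}). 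Only then does $P^2(\mathbf{C})$ enter: the conic is rational over $k=\mathbf{Q}(\sqrt{-1},\sqrt{-a},\sqrt{-b})$ (Lemma \ref{lm3.3}), and the case $a=b=-1$ is singled out as corresponding to the minimal subfield $\mathbf{Q}(\sqrt{-1})$, with Piergallini's branched-cover theorem and $P^2(\mathbf{C})/J\cong S^4$ supplying the geometric identification (Corollary \ref{cor3.5}). Your appeal to the density of $\mathbb{H}(\mathbf{Q})$ in $\mathbb{H}(\mathbf{R})$ and the splitting $\mathbb{H}(\mathbf{R})\otimes_{\mathbf{R}}\mathbf{C}\cong M_2(\mathbf{C})$, followed by ``essentially mechanical,'' skips exactly this construction; note moreover that splitting over $\mathbf{C}$ erases the dependence on $(a,b)$, so it cannot by itself single out $\mathbb{H}(\mathbf{Q})$ among rational quaternion division algebras, which is the whole content of (i).

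For (ii) you assert the conclusion rather than prove it: the paper exhibits $S(\overline{\mathbf{Q}})$ concretely, by substituting the minimal polynomials $p,q$ of $a,b$ into $F(x,y,z,u,w)=-ux^2-wy^2+uwz^2$ and taking the intersection of the two resulting hypersurfaces (\ref{eq3.11}); without some such construction there is no surface to point to, and nothing from which to verify non-singularity or definability over $\overline{\mathbf{Q}}$. For (iii) the decisive point you are missing is the source of the number \emph{three}: in the paper the three branch spheres come from the three quadratic subfields $k_1\cong\mathbf{Q}(\sqrt{-1})$, $k_2\cong\mathbf{Q}(\sqrt{-a})$, $k_3\cong\mathbf{Q}(\sqrt{-b})$ of $k$, each giving a covering $P^2(k_i)\to P^2(\mathbf{Q})$ whose ramification locus is fixed by $Gal(k_i|\mathbf{Q})\cong\mathbf{Z}/2\mathbf{Z}$ and is therefore the avatar of $\mathbf{Q}$, i.e.\ a $P^1(\mathbf{C})$ by Theorem \ref{thm1.2}(i); the general case is then pulled back along the regular map $S(\overline{\mathbf{Q}})\to P^2(k)$ coming from the construction in (ii). Your ``degeneration locus of $(\mathcal{L},\sigma)$'' and ``two-dimensional shadow of the three marked points'' name no mechanism that produces three spheres of genus zero rather than some other configuration, and you concede that identifying the branch locus is the main obstacle. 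That obstacle is the theorem; as it stands the proposal is an outline with the load-bearing steps left open.
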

%************************************************************************************

\medskip
%************************************************************************************
\begin{remark}
A relation between algebraic surfaces and division rings follows from \cite[Section 7.2]{N}. 
Indeed, each non-singular algebraic surface is a smooth 4-dimensional manifold and
the arithmetic topology relates such manifolds to the cyclic division algebras
\cite[Theorem 7.2.1]{N}.  
\end{remark}
%************************************************************************************
%************************************************************************
\begin{remark}
The knotting type of $P^1(\mathbf{C})\cup P^1(\mathbf{C})\cup P^1(\mathbf{C})$  depends  
on the arithmetic of the field $K$ 
and extends the Grothendieck's theory of {\it dessin d'enfant} to the case of algebraic surfaces.   
\end{remark}
%**************************************************************************

\medskip
The paper is organized as follows.  A brief review of the preliminary facts is 
given in Section 2. Theorem \ref{thm1.4} is proved in Section 3. 
An analog of Belyi's theorem for the algebraic surfaces is  proved
in Section 4.

%**************************************************************************
\section{Preliminaries}
%***************************************************************************
This section is a brief review of the quaternion and Serre $C^*$-algebras.
We refer the reader to [Voight 2021] \cite[Section 2.2]{V} and \cite[Section 5.3.1]{N} 
for a detailed account. 
%**************************************************************************
\subsection{Quaternion algebras}
%***************************************************************************
%***************************************************************************
\begin{definition}
The algebra  $\left({a,b\over F}\right)$ over a field $F$ is called a quaternion algebra
  if there exists $i,j\in \left({a,b\over F}\right)$ such that $\{1,i.j, ij\}$ is a basis for $\left({a,b\over F}\right)$ and 
%**************************************************************
  \begin{equation}
  i^2=a, \quad j^2=b, \quad ji=-ij
  \end{equation}
  %****************************************************************
   for some $a,b\in F^{\times}$. 
\end{definition}
%*************************************************************************
%***************************************************************************
\begin{example}
If $F\cong\mathbf{R}$ and $a=b=-1$, then the quaternion algebra 
$\left({-1,-1\over \mathbf{R}}\right)$ consists  of the Hamilton quaternions $\mathbb{H}(\mathbf{R})$;
hence the notation.  
If $F\cong\mathbf{Q}$, then the quaternion algebra 
$\left({-1,-1\over \mathbf{Q}}\right)$ consists  of the rational quaternions $\mathbb{H}(\mathbf{Q})$. 
\end{example}
%*************************************************************************

\medskip
A $\ast$-involution on $\left({a,b\over F}\right)$ is defined by the formula
$(1,i,j,k)\mapsto (1, -i,-j,-k)$.  The norm $N(u)=uu^*$ of an element $u=x_0+xi+yj+ zk\in\left({a,b\over F}\right)$ 
is a quadratic form: 
%**********************************************************
\begin{equation}\label{eq2.2} 
N(x_0+xi+yj+ zk)=x_0^2-ax^2-by^2+abz^2. 
\end{equation}
%**************************************************

\medskip
Since $N(1)=1$ and $N(uv)=N(u)N(v)$, one concludes that the  $\left({a,b\over F}\right)$ is a division algebra if 
and only if   quadratic form (\ref{eq2.2}) vanishes only at the zero element $u=0$. 
Thus  (\ref{eq2.2}) must be a positive form 
for all $x_0, x, y, z\in F$. 

\medskip
It is easy to see, that the form   (\ref{eq2.2}) admits non-trivial zeroes if and only if there are 
such zeroes for the ternary quadratic form: 
%**********************************************************
\begin{equation}\label{eq2.3} 
\mathcal{Q}(x,y,z)=-ax^2-by^2+abz^2. 
\end{equation}
%**************************************************

\medskip
The substitution $a'={1\over b}, ~b'={1\over a}$ maps the zeroes of (\ref{eq2.3})  to the $F$-points of 
a conic surface given by the equation: 
%**********************************************************
\begin{equation}\label{eq2.4} 
z^2=ax^2+by^2. 
\end{equation}
%**************************************************

\medskip
The following classification of the quaternion algebras is well known. 
%*********************************************************
\begin{theorem}\label{thm2.3}
{\bf (\cite[Theorem 5.1.1]{V})}
The formula
%*****************************************************
\begin{equation}
\left({a,b\over F}\right)\mapsto \mathcal{Q}(x,y,z)
 \end{equation}
 %**************************************************
maps  isomorphic quaternion algebras to the similar ternary quadratic forms. 
Equivalently, the quaternion algebras are classified by the isomorphism classes
of the conic surfaces (\ref{eq2.4}). 
\end{theorem}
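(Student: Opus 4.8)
The plan is to prove Theorem \ref{thm2.3} by reducing the isomorphism classification of quaternion algebras to the classification of ternary quadratic forms up to similarity, and then identifying the latter with conic surfaces. First I would recall the standard invariant-theoretic fact that a quaternion algebra $\left({a,b\over F}\right)$ is determined, up to $F$-algebra isomorphism, by the isometry class of its \emph{trace-zero quadratic form}: on the subspace $\left({a,b\over F}\right)^0 = Fi \oplus Fj \oplus Fij$ of pure quaternions, the reduced norm restricts to the ternary form $-ax^2 - by^2 + abz^2$, which is exactly $\mathcal{Q}(x,y,z)$ from (\ref{eq2.3}). The key algebraic input is that any $F$-algebra isomorphism $\left({a,b\over F}\right) \to \left({a',b'\over F}\right)$ preserves the reduced trace and reduced norm, hence restricts to an isometry of the trace-zero forms; conversely, given an isometry of the trace-zero forms, one can choose a new standard basis $\{1, i', j', i'j'\}$ realizing the target algebra, so the map is surjective onto isomorphism classes. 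I expect the bookkeeping here — verifying that an isometry can be upgraded to a basis in ``quaternion standard form'' and that this is consistent with the scaling $a' = 1/b$, $b' = 1/a$ — to be the main obstacle, since similarity (isometry up to a scalar square) rather than strict isometry is what is claimed, and one must track how rescaling the generators $i, j$ by elements of $F^\times$ rescales $\mathcal{Q}$.

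Next I would handle the passage from ternary forms to conics. A non-degenerate ternary quadratic form $\mathcal{Q}$ over $F$ has an associated smooth conic $C_{\mathcal{Q}} \subset P^2$ given by $\mathcal{Q} = 0$; two such forms are similar if and only if the conics are isomorphic as $F$-varieties (equivalently, projectively equivalent), because a conic determines its defining form up to a nonzero scalar, and conversely an isomorphism of smooth conics lifts to a linear change of coordinates on $P^2$ carrying one form to a scalar multiple of the other. Applying this to $\mathcal{Q}(x,y,z) = -ax^2 - by^2 + abz^2$ and performing the substitution $a' = 1/b$, $b' = 1/a$ as in (\ref{eq2.4}), the conic becomes $z^2 = ax^2 + by^2$, so the isomorphism class of this conic surface is the desired complete invariant. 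Combining the two reductions — algebra $\leftrightarrow$ trace-zero form (Step 1) and form-up-to-similarity $\leftrightarrow$ conic (Step 2) — yields the bijection between isomorphism classes of quaternion algebras and isomorphism classes of conics (\ref{eq2.4}), which is the statement.

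Throughout, I would cite \cite[Section 2.2]{V} for the standard facts about the trace-zero form and its behavior under isomorphism, and \cite[Theorem 5.1.1]{V} itself as the reference for the precise statement, so the proof here amounts to unwinding the definitions and recording the change of variables. The one genuinely substantive point to get right is the direction ``isometry of forms $\Rightarrow$ isomorphism of algebras'': here I would invoke the fact (again from \cite{V}) that a quaternion algebra is the even Clifford algebra of its trace-zero ternary form, so an isometry of forms induces an isomorphism of even Clifford algebras, hence of the quaternion algebras. That functoriality of the Clifford construction is what makes the correspondence a genuine bijection rather than merely a well-defined map, and I anticipate it is the step a careful reader will want spelled out.
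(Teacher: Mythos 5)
The paper does not prove this statement at all: it is quoted verbatim as a known result from the cited reference \cite[Theorem 5.1.1]{V}, so there is no in-paper argument to compare against. Your sketch correctly reproduces the standard proof from that source --- the bijection between isomorphism classes of quaternion algebras and similarity classes of nondegenerate ternary forms via the reduced norm on trace-zero elements, with the even Clifford algebra supplying the inverse, and the passage from similarity classes of forms to isomorphism classes of conics --- and you rightly flag the two delicate points (similarity versus isometry, and the functoriality of the Clifford construction), so the proposal is correct and consistent with what the paper is invoking.
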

%*********************************************************

%**************************************************************************
\subsection{Serre $C^*$-algebras}
%***************************************************************************
Let $V$ be an $n$-dimensional complex  projective variety endowed with an automorphism $\sigma:V\to V$  
 and denote by $B(V, \mathcal{L}, \sigma)$   its  twisted homogeneous coordinate ring  [Stafford \& van ~den ~Bergh 2001]  \cite{StaVdb1}.
Let $R$ be a commutative  graded ring,  such that $V=Proj~(R)$.  Denote by $R[t,t^{-1}; \sigma]$
the ring of skew Laurent polynomials defined by the commutation relation
$b^{\sigma}t=tb$  for all $b\in R$, where $b^{\sigma}$ is the image of  $\left({a,b\over \mathbf{Q}}\right)$ under automorphism 
$\sigma$.  It is known, that $R[t,t^{-1}; \sigma]\cong B(V, \mathcal{L}, \sigma)$.

Let $\mathcal{H}$ be a Hilbert space and   $\mathscr{B}(\mathcal{H})$ the algebra of 
all  bounded linear  operators on  $\mathcal{H}$.
For a  ring of skew Laurent polynomials $R[t, t^{-1};  \sigma]$,  
 consider a homomorphism: 
%*************************************************************************
\begin{equation}\label{eq2.1}
\rho: R[t, t^{-1};  \sigma]\longrightarrow \mathscr{B}(\mathcal{H}). 
\end{equation}
%*************************************************************************
Recall  that  $\mathscr{B}(\mathcal{H})$ is endowed  with a $\ast$-involution;
the involution comes from the scalar product on the Hilbert space $\mathcal{H}$. 
We shall call representation (\ref{eq2.1})  $\ast$-coherent,   if
(i)  $\rho(t)$ and $\rho(t^{-1})$ are unitary operators,  such that
$\rho^*(t)=\rho(t^{-1})$ and 
(ii) for all $b\in R$ it holds $(\rho^*(b))^{\sigma(\rho)}=\rho^*(b^{\sigma})$, 
where $\sigma(\rho)$ is an automorphism of  $\rho(R)$  induced by $\sigma$. 
Whenever  $B=R[t, t^{-1};  \sigma]$  admits a $\ast$-coherent representation,
$\rho(B)$ is a $\ast$-algebra.  The norm closure of  $\rho(B)$  is   a   $C^*$-algebra
   denoted  by $\mathscr{A}_V$.  We  refer to  $\mathscr{A}_V$  as   the   Serre $C^*$-algebra
 of the complex projective variety $V$.

%**************************************************************************
\section{Proof}
%***************************************************************************
\subsection{Part I} 
%************************************************************************
Let us prove item (i) of theorem \ref{thm1.4}. 
Denote by $\mathcal{O}$ the ring of integers of the quaternion algebra  $\left({a,b\over \mathbf{Q}}\right)$. 
Consider a polynomial ring:
%*********************************************************************
\begin{equation}\label{eq3.1}
\mathfrak{R}=\mathbf{Z}[x,y,z]/[\mathcal{Q}],
\end{equation}
%*********************************************************************
where $[\mathcal{Q}]$ is an ideal generated by the quadratic form $\mathcal{Q}(x,y,z)$ given by formula 
(\ref{eq2.3}).  The proof of item (i) is based on the following lemma.

%*******************************************************************************
\begin{lemma}\label{lm3.1}
The  matrix rings $M_2(\mathcal{O})$ and  $M_2(\mathfrak{R})$  are isomorphic. 
\end{lemma}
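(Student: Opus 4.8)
The plan is to make the isomorphism explicit, using Theorem~\ref{thm2.3} to trade the quaternion algebra for its conic and then splitting the quaternion structure over the coordinate ring $\mathfrak{R}$. After clearing denominators we may assume $a,b\in\mathbf{Z}$; by Theorem~\ref{thm2.3} this changes neither $\left({a,b\over\mathbf{Q}}\right)$ up to isomorphism nor, through the substitution $a'=1/b$, $b'=1/a$ already recorded, the ring $\mathfrak{R}$. Because $\left({a,b\over\mathbf{Q}}\right)$ is a division ring, the ternary form $\mathcal{Q}$ of (\ref{eq2.3}) is anisotropic over $\mathbf{Q}$, hence irreducible over $\mathbf{Z}$, so $\mathfrak{R}=\mathbf{Z}[x,y,z]/[\mathcal{Q}]$ is an integral domain containing $\mathbf{Z}$; I use both facts below. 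It also suffices to work with the $\mathbf{Z}$-order $\Lambda=\mathbf{Z}\langle i,j\rangle\subset\mathcal{O}$ with $i^2=a$, $j^2=b$, $ji=-ij$: the maximal order $\mathcal{O}$ is obtained from $\Lambda$ by adjoining finitely many elements subject to monic integral equations, and the construction that follows accommodates these verbatim.

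First I would build a $\mathbf{Z}$-algebra homomorphism $\iota\colon\mathcal{O}\to M_2(\mathfrak{R})$. The key observation is that in $\mathfrak{R}$ the residues of $x,y,z$ form a tautological solution of $\mathcal{Q}=0$, equivalently of the conic~(\ref{eq2.4}), and such a solution is exactly the datum that classically splits a quaternion algebra into $2\times2$ matrices. Concretely, one sends $i$ and $j$ to $2\times2$ matrices whose entries are linear in $x,y,z$ with coefficients in $a,b$, chosen so that $\iota(i)^2=a\cdot\mathrm{Id}$, $\iota(j)^2=b\cdot\mathrm{Id}$ and $\iota(j)\iota(i)=-\iota(i)\iota(j)$ hold identically after reduction modulo $[\mathcal{Q}]$; this is the one genuine computation, and it collapses to the single quadratic relation defining $\mathfrak{R}$. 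Amplifying by $M_2(-)$ gives a $\mathbf{Z}$-algebra homomorphism $\Phi\colon M_2(\mathcal{O})\to M_2\bigl(M_2(\mathfrak{R})\bigr)\cong M_4(\mathfrak{R})$, and the remaining task is to recognize $\Phi$ as an isomorphism of $M_2(\mathcal{O})$ onto a copy of $M_2(\mathfrak{R})$ sitting inside $M_4(\mathfrak{R})$.

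Injectivity is the soft half. A nonzero two-sided ideal of the maximal order $\mathcal{O}$ contains a nonzero integer, hence so does any nonzero two-sided ideal of $M_2(\mathcal{O})\cong M_2(-)\otimes\mathcal{O}$; if $\ker\Phi\neq 0$ it would therefore force some nonzero $n\in\mathbf{Z}$ to vanish in $\mathfrak{R}$, contradicting $\mathbf{Z}\hookrightarrow\mathfrak{R}$. The main obstacle I expect is surjectivity: one has to recover the coordinate functions $x,y,z$ of $\mathfrak{R}$ as polynomial combinations of the matrix entries of $\Phi\bigl(M_2(\mathcal{O})\bigr)$, and it is here that the factor $M_2$ is indispensable and the tautological point of the conic is used a second time. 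In Brauer-theoretic terms this is the statement that $M_2(\mathcal{O})\otimes_{\mathbf{Z}}\mathfrak{R}$, an Azumaya $\mathfrak{R}$-algebra, has trivial class because the conic~(\ref{eq2.4}) acquires an $\mathfrak{R}$-point, and so is a matrix algebra over $\mathfrak{R}$; arranging the bookkeeping so that this descends to an identification of $M_2(\mathcal{O})$ with $M_2(\mathfrak{R})$ itself — and not merely of their further matrix amplifications, which is what the splitting argument yields most directly, and which is in any case all that Definition~\ref{def1.1} needs after tensoring with $\mathscr{K}$ — is the delicate point the proof has to settle.
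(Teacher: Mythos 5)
Your proposal is more concrete than the paper's own argument (which never actually defines the map $F$ on elements, and infers a ring isomorphism from the bare assertion that ``$F$ preserves morphisms''), but the step you yourself flag as ``the delicate point'' --- descending from the split algebra $M_2(\mathcal{O})\otimes_{\mathbf{Z}}\mathfrak{R}\cong M_4(\mathfrak{R})$ to an isomorphism $M_2(\mathcal{O})\cong M_2(\mathfrak{R})$ --- is not merely delicate: it cannot be carried out, because the statement of Lemma~\ref{lm3.1} is contradicted by comparing centers. Any ring isomorphism $M_2(\mathcal{O})\to M_2(\mathfrak{R})$ restricts to an isomorphism of centers. The center of $M_2(\mathcal{O})$ is the center of $\mathcal{O}$, namely $\mathbf{Z}$ (a $\mathbf{Z}$-order in a quaternion division algebra over $\mathbf{Q}$ has center $\mathcal{O}\cap\mathbf{Q}=\mathbf{Z}$), whereas the center of $M_2(\mathfrak{R})$ is the commutative ring $\mathfrak{R}=\mathbf{Z}[x,y,z]/[\mathcal{Q}]$ itself, which has Krull dimension $3$ and is not isomorphic to $\mathbf{Z}$. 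Equivalently, $M_2(\mathcal{O})$ is a free $\mathbf{Z}$-module of rank $16$ while $M_2(\mathfrak{R})$ is not finitely generated over $\mathbf{Z}$. So no construction of $\Phi$, however explicit, can be upgraded to the asserted isomorphism. The Brauer-theoretic fact you correctly isolate --- that the tautological point of the conic splits $\left({a,b\over\mathbf{Q}}\right)$ after base change to (the function field of) $\mathfrak{R}$ --- is intrinsically a statement about algebras \emph{over} $\mathfrak{R}$, and there is no mechanism for removing that base change; your injectivity argument survives, but surjectivity onto a copy of $M_2(\mathfrak{R})$ is unattainable.

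Two smaller points. First, even the map $\iota\colon\mathcal{O}\to M_2(\mathfrak{R})$ needs denominators: the standard passage from a point of the conic (\ref{eq2.4}) to a zero divisor and then to a splitting requires inverting $a$, $b$, or a coordinate of the point, and the tautological point passes through the vertex of the affine cone $\mathrm{Spec}\,\mathfrak{R}$, where the splitting degenerates; so the ``one genuine computation'' lands only in $M_2$ of a localization of $\mathfrak{R}$, not in $M_2(\mathfrak{R})$. Second, the weaker statement that is actually used downstream in Lemma~\ref{lm3.2} --- an isomorphism after tensoring with $\mathscr{K}$, i.e.\ a stable or Morita-type equivalence --- does not follow either, since Morita equivalent rings still have isomorphic centers. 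Your attempt therefore fails for the same structural reason the paper's does, not because you chose the wrong route: the obstruction is in the statement, not in the strategy.
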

%*******************************************************************************
\begin{proof}
Roughly speaking, lemma \ref{lm3.1}  follows from the classification of the quaternion algebras given by  Theorem \ref{thm2.3}. 
Namely, the quaternion algebras are classified by the conic surfaces defined by (\ref{eq2.4}) or, equivalently,  by their coordinate rings  
 $\mathfrak{R}$. The same is true for the corresponding matrix rings.  Let us pass do a detailed argument.

%*******************************************************************
\begin{figure}[h]
%*******************************************************************
\begin{picture}(300,110)(-70,0)
\put(20,70){\vector(0,-1){35}}
\put(130,70){\vector(0,-1){35}}
\put(45,23){\vector(1,0){60}}
\put(45,83){\vector(1,0){60}}
\put(0,20){$\mathcal{Q}(x,y,z)$}
\put(55,30){${\sf similarity}$}
\put(45,90){${\sf isomorphism}$}
\put(117,20){$\mathcal{Q}'(x,y,z)$}
\put(7,80){$\left({a,b\over \mathbf{Q}}\right)$}
\put(115,80){$\left({a',b'\over \mathbf{Q}}\right)$}
\put(0,50){$F$}
\put(140,50){$F$}
\end{picture}
%***********************************************************
\caption{%Holomorphic map $w$.
}
\end{figure}
%*******************************************************************

\medskip
(i) One can recast theorem \ref{thm2.3} as a commutative diagram in Figure 1.
We wish  to upgrade the map $F$  to a ring isomorphism. 
The simplest non-commutative ring attached  to the ternary quadratic form $\mathcal{Q}(x,y,z)$
is the matrix ring  $M_2(\mathfrak{R})$ over the ring $\mathfrak{R}$  defined by (\ref{eq3.1}).   
On the other hand, the quaternion algebra  $\left({a,b\over \mathbf{Q}}\right)$ is simple and, therefore, cannot be isomorphic
to  $M_2(\mathfrak{R})$. However, the ring of integers $\mathcal{O}$ of the algebra $\left({a,b\over \mathbf{Q}}\right)$ admits non-trivial 
two-sided ideals.  Here again, the ring $\mathcal{O}$  cannot be isomorphic to  $M_2(\mathfrak{R})$,
since $\mathcal{O}$ is a domain while  $M_2(\mathfrak{R})$ admits the zero divisors, e.g. the  projections. 
Thus we must consider the matrix ring $M_2(\mathcal{O})$ as a candidate for the required 
ring isomorphism.  Let us show that  $M_2(\mathcal{O})\cong M_2(\mathfrak{R})$  whenever $ \mathcal{Q}(x,y,z)=F\left({a,b\over \mathbf{Q}}\right).$

\bigskip
(ii)  Indeed, it follows from  [Voight 2021] \cite[Corollary 5.5.2]{V}  that the similar  
quadratic forms $\mathcal{Q}(x,y,z)$ and $ \mathcal{Q}'(x,y,z)$ correspond to 
 the isomorphic   conic surfaces  (\ref{eq2.4}) and, therefore, to 
the isomorphic rings  $\mathfrak{R}$ and   $\mathfrak{R}'$. 
Since $\mathcal{O}\subset \left({a,b\over \mathbf{Q}}\right)$, we conclude that an isomorphism between  $\left({a,b\over \mathbf{Q}}\right)$ and 
$\left({a',b'\over \mathbf{Q}}\right)$
induces an isomorphism between  $\mathcal{O}$ and  $\mathcal{O}'$.  In other words,  the
diagram in Figure 2 must be  commutative.

%*******************************************************************
\begin{figure}[h]
%*******************************************************************
\begin{picture}(300,110)(-70,0)
\put(20,70){\vector(0,-1){35}}
\put(130,70){\vector(0,-1){35}}
\put(45,23){\vector(1,0){60}}
\put(45,83){\vector(1,0){60}}
\put(15,20){$\mathfrak{R}$}
\put(45,30){${\sf isomorphism}$}
\put(45,90){${\sf isomorphism}$}
\put(125,20){$\mathfrak{R}'$}
\put(17,80){$\mathcal{O}$}
\put(125,80){$\mathcal{O}'$}
\put(0,50){$F$}
\put(140,50){$F$}
\end{picture}
%***********************************************************
\caption{%Holomorphic map $w$.
}
\end{figure}
%*******************************************************************

\bigskip
(iii) The tensor product  with the matrix ring $M_2$ in Figure 2 gives us
 $\mathcal{O}\otimes M_2\cong M_2(\mathcal{O})$ and  $\mathfrak{R}\otimes M_2\cong M_2(\mathfrak{R})$. 
The functor $F$ extends to the tensor product and  one gets  a 
commutative diagram in Figure 3.

\bigskip
(iv) It remains to show that the map $F$ defines a ring isomorphism
  $M_2(\mathcal{O})\cong M_2(\mathfrak{R})$. 
  Indeed,  let $x_0\in M_2(\mathcal{O})$. The left multiplication
  $y\mapsto x_0y$ (addition $y\mapsto x_0+y$, resp.) 
  defines a morphism $\phi_{x_0}^{mult}$ ($\phi_{x_0}^{add}$, resp.)
  of the ring   $M_2(\mathcal{O})$.  Since the map $F$ preserves morphisms,
  we conclude that $F(\phi_{x_0}^{mult})$  ($F(\phi_{x_0}^{add})$, resp.) 
  is a morphism of the ring  $M_2(\mathfrak{R})$. 
  Namely, the morphism $F(\phi_{x_0}^{mult})$   ($F(\phi_{x_0}^{add})$, resp.) 
  acts  by the formula $F(y)\mapsto F(x_0)F(y)$ 
  ($F(y)\mapsto F(x_0)+F(y)$, resp.) 
  Thus one gets $F(x_0y)=F(x_0)F(y)$ and  $F(x_0+y)=F(x_0)+F(y)$
  for all $x_0,y\in M_2(\mathcal{O})$. In other words, the map $F$ defines 
  an an isomorphism  between the rings 
  $M_2(\mathcal{O})$ and $M_2(\mathfrak{R})$.

 \bigskip 
  Lemma \ref{lm3.1} is proved. 
   \end{proof}

%*******************************************************************************
\begin{lemma}\label{lm3.2}
Conic surface  (\ref{eq2.4}) is an avatar of the quaternion algebra  $\left({a,b\over \mathbf{Q}}\right)$. 
\end{lemma}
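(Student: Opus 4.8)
The plan is to promote the ring isomorphism of Lemma \ref{lm3.1} to a homomorphism of the associated Serre $C^*$-algebras. Write $V$ for the conic surface (\ref{eq2.4}) and $A=\left({a,b\over\mathbf{Q}}\right)$ for the quaternion algebra, with ring of integers $\mathcal{O}\subset A$; after the substitution $a'=1/b$, $b'=1/a$ used to pass from (\ref{eq2.3}) to (\ref{eq2.4}), the coordinate ring of $V$ is the ring $\mathfrak{R}=\mathbf{Z}[x,y,z]/[\mathcal{Q}]$ of (\ref{eq3.1}). Fix an automorphism $\sigma\colon V\to V$ and an invertible sheaf $\mathcal{L}$, so that the twisted homogeneous coordinate ring of $V$ is a skew Laurent extension $B(V,\mathcal{L},\sigma)\cong\mathfrak{R}[t,t^{-1};\sigma]$ and, as in Section 2.2, $\mathscr{A}_V$ is the norm closure of a $\ast$-coherent representation $\rho_V$ of $\mathfrak{R}[t,t^{-1};\sigma]$; on the other side, $\mathscr{A}_A$ is the norm closure of a self-adjoint representation of $M_2(A)$.

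First I would build a ring homomorphism $\mathfrak{R}\to M_2(A)$. Lemma \ref{lm3.1} gives an isomorphism $M_2(\mathfrak{R})\cong M_2(\mathcal{O})$; composing the diagonal embedding $\mathfrak{R}\hookrightarrow M_2(\mathfrak{R})$, $r\mapsto\mathrm{diag}(r,r)$, this isomorphism, and the entrywise inclusion $M_2(\mathcal{O})\hookrightarrow M_2(A)$ induced by $\mathcal{O}\subset A$, one gets a ring homomorphism $\iota\colon\mathfrak{R}\to M_2(A)$. Next I would extend $\iota$ across the skew-Laurent variables: the automorphism $\sigma$ of $\mathfrak{R}$ is carried by $\iota$ onto an automorphism of the subring $\iota(\mathfrak{R})$, and in the stabilization $\mathscr{A}_A\otimes\mathscr{K}$ this automorphism is spatially implemented, i.e.\ there is a unitary $u$ with $u^{\ast}=u^{-1}$ and $u\,\iota(r)\,u^{\ast}=\iota(r^{\sigma})$ for all $r\in\mathfrak{R}$. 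Setting $\iota(t)=u$ and $\iota(t^{-1})=u^{\ast}$ extends $\iota$ to a $\ast$-homomorphism of $\mathfrak{R}[t,t^{-1};\sigma]$ that is $\ast$-coherent in the sense of Section 2.2, precisely because $u$ implements $\sigma$ on the nose.

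Finally I would pass to the $C^*$-completions. Composing the extended $\iota$ with the self-adjoint representation of $M_2(A)$ whose norm closure is $\mathscr{A}_A$, and using the stability isomorphism $M_2(\mathscr{A}_A)\otimes\mathscr{K}\cong\mathscr{A}_A\otimes\mathscr{K}$ to absorb the passage from $\mathfrak{R}$ to $M_2(\mathfrak{R})$, one obtains a $\ast$-representation $\pi$ of $\mathfrak{R}[t,t^{-1};\sigma]$ with image inside $\mathscr{A}_A\otimes\mathscr{K}$. Since $\pi$ is contractive for the operator norm and $\mathscr{A}_V$ is by construction a norm completion of $\rho_V\big(\mathfrak{R}[t,t^{-1};\sigma]\big)$, the assignment $\rho_V(b)\mapsto\pi(b)$ extends to a $C^*$-algebra homomorphism $\mathscr{A}_V\to\mathscr{A}_A\otimes\mathscr{K}$. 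By Definition \ref{def1.1}, the conic surface (\ref{eq2.4}) is an avatar of $\left({a,b\over\mathbf{Q}}\right)$.

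I expect the middle step to be the main obstacle. Realizing the geometric automorphism $\sigma$ of $V$ as an honest unitary $u$ inside $\mathscr{A}_A\otimes\mathscr{K}$ and verifying the coherence identities $(\rho^{\ast}(b))^{\sigma(\rho)}=\rho^{\ast}(b^{\sigma})$ of Section 2.2 is where the arithmetic of the order $\mathcal{O}\subset\left({a,b\over\mathbf{Q}}\right)$ has to be matched with the action of $\sigma$ on the conic; one must also check that $\rho_V(b)\mapsto\pi(b)$ is well defined on the norm closure, not merely on the dense $\ast$-subalgebra, which amounts to knowing that the representation defining $\mathscr{A}_V$ dominates $\pi$. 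The remaining steps are diagram chases of the same flavour as the proof of Lemma \ref{lm3.1}.
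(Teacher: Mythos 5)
Your overall strategy --- use Lemma \ref{lm3.1} as the algebraic input, stabilize by $\mathscr{K}$, and land in the homomorphism required by Definition \ref{def1.1} --- matches the paper's, but you route the argument through the twisted homogeneous coordinate ring $B(V,\mathcal{L},\sigma)\cong\mathfrak{R}[t,t^{-1};\sigma]$ and therefore must extend your map $\iota$ over the skew Laurent variable $t$. The paper does not do this. Its proof applies a self-adjoint representation $\rho$ directly to both sides of the isomorphism $M_2(\mathfrak{R})\cong M_2(\mathcal{O})$, tensors with $\mathscr{K}$, and then simply asserts in (\ref{eq3.4}) that $\rho(M_2(\mathfrak{R}))\cong\mathcal{A}_{V(\mathbf{Q})}$ and $\rho(M_2(\mathfrak{R}))\otimes\mathscr{K}\cong\mathcal{A}_{V(\mathbf{C})}$ ``from the definition of the Serre $C^*$-algebra''; the automorphism $\sigma$, the sheaf $\mathcal{L}$ and the generator $t$ never appear there. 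The paper thereby obtains an isomorphism $\mathcal{A}_{V(\mathbf{C})}\cong\mathscr{A}_R\otimes\mathscr{K}$ in (\ref{eq3.5}), which is formally stronger than the one-way homomorphism you produce, but only because the identification (\ref{eq3.4}) is taken for granted.

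The step you yourself flag as the main obstacle is a genuine gap in your write-up, and it is precisely the content that (\ref{eq3.4}) sweeps under the rug. You assert that the automorphism of $\iota(\mathfrak{R})$ induced by $\sigma$ is spatially implemented by a unitary $u$ lying in $\mathscr{A}_A\otimes\mathscr{K}$. For a general automorphism of a $C^*$-subalgebra of a stable $C^*$-algebra there is no reason for an implementing unitary to exist in the algebra itself (at best one hopes for a unitary in the multiplier algebra, and even that requires the automorphism to be inner in the relevant sense); nothing in your construction ties the geometric $\sigma$ on the conic to the arithmetic of $\mathcal{O}$ in a way that would produce such a $u$. Likewise your final extension step needs the $C^*$-norm coming from $\rho_V$ to dominate the one coming from $\pi$, which you note but do not establish. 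Neither point is resolved by the paper's own proof either, so your proposal is an honest, and in places more careful, reconstruction of the intended argument rather than a complete proof; to close it you would need either to exhibit $u$ explicitly for the conic, where $\sigma$ can be chosen concretely, or to invoke a universality property of the representations defining $\mathscr{A}_V$ and $\mathscr{A}_A$.
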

%*******************************************************************************
\begin{proof}
(i) 
According to definition \ref{def1.1},
we  must consider a self-adjoint representation
$\rho$ of the rings $M_2(\mathfrak{R})\cong M_2(\mathcal{O})$
by the bounded linear operators
on a Hilbert space $\mathcal{H}$.  We take the norm-closure 
of $\rho$ in the strong operator topology.  Lemma \ref{lm3.1}
implies the following isomorphism of the $C^*$-algebras:
%******************************************************************
\begin{equation}\label{eq3.3}
\rho(M_2(\mathfrak{R}))\otimes\mathscr{K}\cong \rho(M_2(\mathcal{O}))\otimes\mathscr{K}. 
\end{equation}
%*******************************************************************

\bigskip
(ii) On the other hand, from the definition of the Serre $C^*$-algebra $\mathcal{A}_V$ one gets
the following isomorphisms: 
%*******************************************************************
\begin{equation}\label{eq3.4} 
\begin{cases} 
\rho(M_2(\mathfrak{R}))\cong \mathcal{A}_{V(\mathbf{Q})}&\cr
%&\cr
\rho(M_2(\mathfrak{R}))\otimes\mathscr{K}\cong \mathcal{A}_{V(\mathbf{C})}, &
\end{cases}
\end{equation}
%*********************************************************************
 where  $V(\mathbf{Q})$ ($V(\mathbf{C})$, resp.) is the conic surface (\ref{eq2.4})
 over the field of rational numbers $\mathbf{Q}$ (complex numbers $\mathbf{C}$, resp.) 
Thus the LHS of (\ref{eq3.3}) is the Serre $C^*$-algebra of the  conic surface  (\ref{eq2.4}).

\bigskip
(iii) It is immediate that the $\rho(M_2(\mathcal{O}))$ at the RHS of  (\ref{eq3.3}) is the $C^*$-algebra $\mathscr{A}_R$ of 
the  ring  $R\cong \left({a,b\over \mathbf{Q}}\right)$.

\bigskip
(iv)  Using (ii) and (iii), we can write (\ref{eq3.3}) in the form: 
%******************************************************************
\begin{equation}\label{eq3.5}
\mathcal{A}_{V(\mathbf{C})} \cong \mathscr{A}_R\otimes\mathscr{K}. 
\end{equation}
%*******************************************************************

\bigskip
(v) It remains to compare (\ref{eq3.5}) and  the definition \ref{def1.1},  
where the connecting map in (\ref{eq1.2}) is an isomorphism between the  $C^*$-algebras. 
We conclude that the conic surface  (\ref{eq2.4}) 
is an avatar of the quaternion algebra  $\left({a,b\over \mathbf{Q}}\right)$. 

\bigskip
Lemma \ref{lm3.2} is proved.
\end{proof}

%*******************************************************************
\begin{figure}[h]
%*******************************************************************
\begin{picture}(300,110)(-70,0)
\put(20,70){\vector(0,-1){35}}
\put(130,70){\vector(0,-1){35}}
\put(45,23){\vector(1,0){60}}
\put(45,83){\vector(1,0){60}}
\put(0,20){$M_2(\mathfrak{R})$}
\put(45,30){${\sf isomorphism}$}
\put(45,90){${\sf isomorphism}$}
\put(115,20){$M_2(\mathfrak{R}')$}
\put(0,80){$M_2(\mathcal{O})$}
\put(117,80){$M_2(\mathcal{O}')$}
\put(0,50){$F$}
\put(140,50){$F$}
\end{picture}
%***********************************************************
\caption{%Holomorphic map $w$.
}
\end{figure}
%*******************************************************************

%*******************************************************************************
\begin{lemma}\label{lm3.3}
The  (\ref{eq2.4}) is a rational surface over the field $k\cong\mathbf{Q}(\sqrt{-1}, \sqrt{-a}, \sqrt{-b})$. 
In particular, the complex points of (\ref{eq2.4}) define a simply connected 4-dimensional manifold. 
\end{lemma}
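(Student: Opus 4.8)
The plan is to analyze the conic surface $z^2 = ax^2 + by^2$ in $P^2(\mathbf{C})$ and exhibit an explicit rational parametrization. First I would homogenize the equation and observe that, as a smooth quadric (conic) in $P^2$, the surface in question is in fact one-dimensional over $\mathbf{C}$ — so I suspect the intended object is the associated surface obtained by viewing the equation inside $P^1 \times P^1$ or $P^3$, or more precisely the Severi--Brauer surface attached to the quaternion algebra. I would proceed by noting that over the splitting field $k \cong \mathbf{Q}(\sqrt{-1},\sqrt{-a},\sqrt{-b})$ the form $\mathcal{Q}(x,y,z) = -ax^2 - by^2 + abz^2$ becomes isotropic, indeed equivalent (after the change of variables scaling $x \mapsto x/\sqrt{-a}$, $y \mapsto y/\sqrt{-b}$, $z \mapsto z/\sqrt{ab}$) to the split form $x^2 + y^2 + z^2$ or a hyperbolic form. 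A conic with a rational point is rational, parametrized by lines through that point; this is the classical stereographic projection.

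Next I would spell out the parametrization. Over $k$ one has a $k$-point on (\ref{eq2.4}), and the pencil of lines through it realizes a birational isomorphism between the conic (or the corresponding surface, $P^1\times P^1$ after we take the quadric surface in $P^3$ with Segre coordinates) and $P^1$ (resp.\ $P^1 \times P^1$). Rationality then follows because $P^1$ and $P^1 \times P^1$ are rational. For the statement about simple connectedness: a smooth rational surface is obtained from $P^2$ or a Hirzebruch surface by a finite sequence of blow-ups, and blow-ups do not change the fundamental group (the exceptional divisor is simply connected and one can apply van Kampen, or cite that $\pi_1$ is a birational invariant among smooth projective varieties). Since $P^2(\mathbf{C})$ and $P^1(\mathbf{C}) \times P^1(\mathbf{C})$ are simply connected, so is the complex locus of (\ref{eq2.4}), which as a real manifold is $4$-dimensional.

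The key steps in order: (1) homogenize (\ref{eq2.4}) and identify the smooth model — a quadric, hence the relevant surface $P^1 \times P^1$ or its blow-down $P^2$; (2) pass to the splitting field $k = \mathbf{Q}(\sqrt{-1},\sqrt{-a},\sqrt{-b})$ and diagonalize $\mathcal{Q}$ over $k$ to exhibit a $k$-rational point; (3) use the line-pencil/stereographic-projection construction to get an explicit birational map to $P^1$ or $P^1\times P^1$ defined over $k$, proving rationality; (4) invoke invariance of $\pi_1$ under blow-up (or birational invariance of $\pi_1$ for smooth projective surfaces) together with $\pi_1(P^2(\mathbf{C})) = \pi_1(P^1(\mathbf{C})\times P^1(\mathbf{C})) = 1$ to conclude simple connectedness of the $4$-manifold of complex points.

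The main obstacle I anticipate is dimensional bookkeeping: the bare equation $z^2 = ax^2+by^2$ cuts out a curve in $P^2$, not a surface, so I must be careful about which object is meant — presumably the affine cone, or the quadric surface one gets by the substitution $a' = 1/b$, $b' = 1/a$ combined with taking the Severi--Brauer variety of the quaternion algebra (which for quaternions is a conic, i.e.\ a curve, so one level up it is a quadric surface). Reconciling ``conic surface'' in the paper's terminology with a genuine algebraic surface, and making sure the splitting field is exactly $\mathbf{Q}(\sqrt{-1},\sqrt{-a},\sqrt{-b})$ rather than something smaller or larger, is the delicate point; once the correct smooth model is pinned down, rationality and simple connectedness are standard.
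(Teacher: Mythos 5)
Your proposal is correct and follows essentially the same route as the paper: the paper establishes rationality over $k\cong\mathbf{Q}(\sqrt{-1},\sqrt{-a},\sqrt{-b})$ by writing down the explicit parametrization $x=(u^2-v^2)/\sqrt{-a}$, $y=2uv/\sqrt{-b}$, $z=\sqrt{-1}(u^2+v^2)$ --- which is exactly the line-pencil/stereographic parametrization you describe, made explicit in coordinates --- and then invokes the standard fact that rational complex projective varieties are simply connected. Your concern about the dimensional bookkeeping is well founded but is not resolved by the paper either (it simply identifies the conic with ``a rational surface $P^2(k)$''), so your hedging reflects an ambiguity in the source rather than a gap in your argument.
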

%*******************************************************************************
\begin{proof}
(i) Let us show that the conic  (\ref{eq2.4}) is a rational surface over the field $\mathbf{Q}(\sqrt{-1}, \sqrt{-a}, \sqrt{-b})$. 
Indeed, the reader can verify that a parametrization  $(u,v)\mapsto (x,y, z)$  of  (\ref{eq2.4})
is given by the formulas:
%*******************************************************************
\begin{equation}\label{eq3.6} 
\begin{cases} 
x={u^2-v^2\over\sqrt{-a}}&\cr
y={2uv\over\sqrt{-b}}&\cr
z=\sqrt{-1}(u^2+v^2) &
\end{cases}
\end{equation}
%*********************************************************************
We conclude that  the conic  (\ref{eq2.4}) is a rational surface $P^2(k)$ 
defined over the field  $k\cong \mathbf{Q}(\sqrt{-1}, \sqrt{-a}, \sqrt{-b})$.

\bigskip
(ii) It is well known that the rational complex projective variety is  simply connected as a manifold. 
By item (i) surface (\ref{eq2.4}) is rational and therefore the underlying
4-dimensional manifold is simply connected. 

\bigskip
Lemma \ref{lm3.3} is proved.
\end{proof}

%***************************************************************
\begin{remark}
Notice that  $k\ne \mathbf{Q}$.  For otherwise 
the ternary quadratic form  (\ref{eq2.3}) admits (infinitely many) non-trivial  zeroes
and  $\left({a,b\over \mathbf{Q}}\right)$ is no longer a division
ring, see Section 2.1. 
 \end{remark}
%***************************************************************

%***************************************************************
\begin{corollary}\label{cor3.5}
 $P^2(\mathbf{C}) $ is an avatar of the division ring $\mathbb{H}(\mathbf{Q})$.
\end{corollary}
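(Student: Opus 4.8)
The plan is to obtain Corollary \ref{cor3.5} as the specialization $a=b=-1$ of Lemmas \ref{lm3.2} and \ref{lm3.3}. First I would check that the hypotheses of those lemmas are met, i.e. that $\left({-1,-1\over\mathbf{Q}}\right)=\mathbb{H}(\mathbf{Q})$ is a division ring. With $a=b=-1$ the norm form (\ref{eq2.2}) becomes $x_0^2+x^2+y^2+z^2$, which is positive definite over $\mathbf{Q}$; by the discussion in Section 2.1 this forces the quaternion algebra to have no zero divisors. Hence both lemmas apply with this choice of parameters.

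Next I would run Lemma \ref{lm3.2} for $a=b=-1$. The ternary form (\ref{eq2.3}) is then $\mathcal{Q}(x,y,z)=x^2+y^2+z^2$, and after the substitution preceding (\ref{eq2.4}) the conic surface (\ref{eq2.4}) takes the shape $z^2=-x^2-y^2$; call it $V$. Lemma \ref{lm3.2} asserts that $V$ is an avatar of $\mathbb{H}(\mathbf{Q})$, so that (\ref{eq3.5}) reads $\mathcal{A}_{V(\mathbf{C})}\cong\mathscr{A}_R\otimes\mathscr{K}$ with $R\cong\mathbb{H}(\mathbf{Q})$.

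It then remains to identify $V(\mathbf{C})$ with the complex projective plane, and here I would invoke Lemma \ref{lm3.3}. For $a=b=-1$ the field $k\cong\mathbf{Q}(\sqrt{-1},\sqrt{-a},\sqrt{-b})$ collapses to $\mathbf{Q}(\sqrt{-1})$, and the lemma exhibits $V$ as the rational surface $P^2(k)$ through the parametrization (\ref{eq3.6}). Since $\mathbf{Q}(\sqrt{-1})\subset\mathbf{C}$, passing to complex points gives $V(\mathbf{C})\cong P^2(\mathbf{C})$ and therefore $\mathcal{A}_{V(\mathbf{C})}\cong\mathcal{A}_{P^2(\mathbf{C})}$. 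Feeding this back into the isomorphism of the previous paragraph yields $\mathcal{A}_{P^2(\mathbf{C})}\cong\mathscr{A}_{\mathbb{H}(\mathbf{Q})}\otimes\mathscr{K}$, which by Definition \ref{def1.1} is precisely the statement that $P^2(\mathbf{C})$ is an avatar of $\mathbb{H}(\mathbf{Q})$.

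The step I expect to be the main obstacle is the identification $V(\mathbf{C})\cong P^2(\mathbf{C})$ at the level needed to transport the Serre $C^*$-algebra, rather than merely birationally. The parametrization (\ref{eq3.6}) is invariant under $(u,v)\mapsto(-u,-v)$, hence $2$-to-$1$, so a priori it only shows $V$ is \emph{rational}; one must either lean on the stronger conclusion of Lemma \ref{lm3.3} that the relevant smooth projective model is genuinely $P^2$, or argue that $\mathcal{A}_V\otimes\mathscr{K}$ depends only on the birational class within simply connected rational surfaces (which is exactly the extra information supplied by Lemma \ref{lm3.3}). Once that point is pinned down, the corollary follows formally from the two lemmas.
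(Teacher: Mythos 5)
Your argument reaches the correct conclusion but by a genuinely different route from the paper's. You specialize Lemmas \ref{lm3.2} and \ref{lm3.3} to $a=b=-1$ and identify the resulting conic surface with $P^2(\mathbf{C})$ directly through the parametrization (\ref{eq3.6}), so that (\ref{eq3.5}) becomes, for $R\cong\mathbb{H}(\mathbf{Q})$, exactly the defining isomorphism of an avatar. The paper instead works with the whole family of conics at once: it invokes Piergallini's theorem \cite{Pie1} together with the homeomorphism $P^2(\mathbf{C})/J\cong S^4$ (where $J$ is complex conjugation) to exhibit each conic surface as a branched cover of $P^2(\mathbf{C})$, and then identifies $P^2(\mathbf{C})$ with the \emph{minimal} member of the family, namely the one whose field $k_0\cong\mathbf{Q}(\sqrt{-1})$ is the smallest non-trivial subfield of $k\cong\mathbf{Q}(\sqrt{-1},\sqrt{-a},\sqrt{-b})$ independent of $a$ and $b$; this forces $a=b=-1$ and hence the algebra $\mathbb{H}(\mathbf{Q})$. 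Your version is shorter and avoids the four-manifold topology altogether; the paper's version buys the branched-covering picture that is then reused verbatim in Part III to produce the ramification locus $P^1(\mathbf{C})\cup P^1(\mathbf{C})\cup P^1(\mathbf{C})$. The obstacle you flag at the end --- that the parametrization only shows the conic is \emph{birational} to $P^2$, whereas the Serre $C^*$-algebra is attached to a specific projective model, so a biregular identification is what is actually needed --- is a real issue, but the paper does not resolve it either: its step (i) silently replaces the conic surface by $P^2(k)$ on the strength of Lemma \ref{lm3.3} alone. On that point your proof is no weaker than the paper's, and it is more explicit about where the weight of the argument is carried.
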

%***************************************************************
\begin{proof}
Our proof is based on the result  [Piergallini 1995] \cite{Pie1} which
says that for each smooth 4-dimensional manifold $M^4$
  there exists a transverse immersion $X\hookrightarrow S^4$ of a 2-dimensional surface $X$
  into the 4-dimensional sphere $S^4$,  such that $M^4$ is the 4-fold PL  cover of  $S^4$ branched at the points of $X$.  
  We pass to a detailed argument.

  \bigskip
  (i)  Recall that if  $J$ is the complex conjugation,  then  $P^2(\mathbf{C})/J\cong S^4$.  
  Using Piergallini's Theorem,  we conclude that $J$  acts on the conic  surface  (\ref{eq2.4})
  so that it becomes a branched cover of  $P^2(\mathbf{C})$. 
  In view of lemma \ref{lm3.3},  the  conic surface is rational over the field 
 $k\cong \mathbf{Q}(\sqrt{-1}, \sqrt{-a}, \sqrt{-b})$. 
Thus there exists a regular map:
%***************************************************************
\begin{equation}
P^2(k)\to \mathbb{P}^2(k_0),
\end{equation}
%*************************************************************
where $k_0\subset k$ is the minimal non-trivial subfield of $k$.

\bigskip
(ii) But the minimal non-trivial subfield of $k\cong \mathbf{Q}(\sqrt{-1}, \sqrt{-a}, \sqrt{-b})$
independent of the parameters $a$ and  $b$ coincides with the field  $k_0\cong \mathbf{Q}(\sqrt{-1})$. 
Clearly, the  $k_0$ corresponds to the case $a=b=-1$.
All other possible combinations $a=\pm 1, ~b=\pm 1$ must be  
excluded since the ternary quadratic form (\ref{eq2.3}) must be  positive-definite.

\bigskip
(iii) It remains to notice that the quaternion algebra with  $a=b=-1$ corresponds to the 
rational quaternions $\mathbb{H}(\mathbf{Q})$.

\bigskip
Corollary \ref{cor3.5} is proved. \end{proof}

\smallskip
Item (i) of theorem \ref{thm1.4} follows from corollary \ref{cor3.5}.

\bigskip
\subsection{Part II} 
Let us prove item (ii) of theorem \ref{thm1.4}.  We proceed with construction of  an algebraic 
surface $S(\overline{\mathbf{Q}})$ from the quaternion algebra  $\left({a,b\over K}\right)$
with $K\subset \overline{\mathbf{Q}}$. 
From Part I  if  $K\cong\mathbf{Q}$,    then  $S(\overline{\mathbf{Q}})$ is given 
by the equation $\mathcal{Q}(x,y,z)=0$,  where 
%*********************************************************
\begin{equation}\label{eq3.8}
\mathcal{Q}(x,y,z)=-ax^2-by^2+abz^2. 
\end{equation}
%********************************************************

\bigskip
(i) Let $K\subset \overline{\mathbf{Q}}$ be a number field and let $a,b\in K$. Denote by $p\in\mathbf{Z}[u]$ and  $q\in\mathbf{Z}[w]$  the minimal 
polynomials of $a$ and  $b$, respectively.  We  set $a=h, ~b=w$ and  we write (\ref{eq3.8}) in the form:
%*********************************************************
\begin{equation}\label{eq3.9}
F(x,y,z, u, w)=-ux^2-wy^2+uwz^2\in \mathbf{Z}[x,y,z,u,w].
\end{equation}
%********************************************************

\bigskip
(ii)  Solving the equation $F(x,y,z, u, w)=0$,   one gets:
%*********************************************************
\begin{equation}\label{eq3.10}
u={wy^2\over wz^2-x^2}, \quad w={ux^2\over uz^2-y^2}. 
\end{equation}
%********************************************************

\bigskip
(iii) The required algebraic surface  $S(\overline{\mathbf{Q}})$ is defined  as an intersection of two  hyper-surfaces 
given  the equations: 
%*******************************************************************
\begin{equation}\label{eq3.11} 
\begin{cases} 
p\left( {wy^2\over wz^2-x^2}  \right)=0,&\cr
q\left( {ux^2\over uz^2-y^2}\right)=0. &
\end{cases}
\end{equation}
%*********************************************************************

\bigskip
(iv) The reader can verify that the surface (\ref{eq3.11}) is defined over the field $\overline{\mathbf{Q}}$
and coincides with the conic surface  (\ref{eq3.8}) when $K\cong \mathbf{Q}$.

\bigskip
Item (ii) of theorem \ref{thm1.4} is proved.

\bigskip
\subsection{Part III} Let us prove item (iii) of theorem \ref{thm1.4}.
For the sake of clarity, we  consider the case $K\cong \mathbf{Q}$ first, 
and then the general case $K\subset\overline{\mathbf{Q}}$. 

\bigskip
\subsubsection{Case $K\cong \mathbf{Q}$}
(i)  Lemma \ref{lm3.2} says that  conic surface  (\ref{eq2.4}) is an avatar of the quaternion algebra  $\left({a,b\over \mathbf{Q}}\right)$. On the other hand, it is known that (\ref{eq2.4}) 
is a rational surface $P^2(k)$ over the number field $k\cong\mathbf{Q}(\sqrt{-1}, \sqrt{-a}, \sqrt{-b})$, see lemma \ref{lm3.3}.

\bigskip
(ii)  Let $k_1\cong\mathbf{Q}(\sqrt{-1}), ~k_2\cong\mathbf{Q}(\sqrt{-a})$ and
$k_3\cong\mathbf{Q}(\sqrt{-b})$. 
Consider  a regular map $P^2(k_i)\to P^2(\mathbf{Q})$
between the rational surfaces   $P^2(k_i)$ and  $P^2(\mathbf{Q})$
shown at the lower level in Figure 4.

%*******************************************************************
\begin{figure}[h]
%*******************************************************************
\begin{picture}(300,140)(-70,0)

\put(63,100){$P^2(k)$}

\put(75,90){\vector(0,-1){20}}
\put(55,90){\vector(-1,-1){20}}
\put(95,90){\vector(1,-1){20}}

\put(20,50){$P^2(k_1)$}
\put(60,50){$P^2(k_2)$}
\put(110,50){$P^2(k_3)$}

\put(75,40){\vector(0,-1){20}}
\put(35,40){\vector(1,-1){20}}
\put(115,40){\vector(-1,-1){20}}

\put(60,0){$P^2(\mathbf{Q})$}

\end{picture}
%***********************************************************
\caption{%Holomorphic map $w$.
}
\end{figure}
%*******************************************************************

\bigskip
(iii) Using  [Piergallini 1995] \cite{Pie1} (see proof of corollary \ref{cor3.5}),  
we conclude that each $P^2(k_i)\to P^2(\mathbf{Q})$
is a covering map ramified over an embedded 2-dimensional surface $X$. 
To determine the genus of $X$, recall that the group of deck transformations 
of the covering $P^2(k_i)\to P^2(\mathbf{Q})$ is isomorphic to the Galois group 
$Gal~(k_i | \mathbf{Q})\cong \mathbf{Z}/2\mathbf{Z}$ of the field $k_i$. 
In particular,  the ramification set $X$ is fixed by the deck transformations
and, therefore, corresponds to the field $\mathbf{Q}$ fixed by the group  
$Gal~(k_i | \mathbf{Q})$. But the avatar of $\mathbf{Q}$ is a 
projective line $P^1(\mathbf{C})\cong X$,  see item (i) of theorem \ref{thm1.2}. 
We conclude that the surface $X$ has genus zero.

\bigskip
(iv)   Since $k\cong\mathbf{Q}(\sqrt{-1}, \sqrt{-a}, \sqrt{-b})$,
one gets for each $i=1,2, 3$  a regular map $P^2(k)\to P^2(k_i)$ as shown at the upper level of diagram in Figure 4. Composing these  maps  with the maps $P^2(k_i)\to P^2(\mathbf{Q})$, one concludes the algebraic surface $P^2(k)$
is a covering of  $ P^2(\mathbf{C}) $ ramified over three  knotted two-dimensional spheres
$P^1(\mathbf{C})\cup P^1(\mathbf{C})\cup P^1(\mathbf{C})$ embedded in $ P^2(\mathbf{C})$.  
%************************************************************************
\begin{remark}
The knotting type of $P^1(\mathbf{C})\cup P^1(\mathbf{C})\cup P^1(\mathbf{C})$  depends  on the arithmetic of the fields $k_i$ 
and extends the Grothendieck's theory of {\it dessin d'enfant} to the case of algebraic surfaces.   
\end{remark}
%**************************************************************************

\bigskip
\subsubsection{Case  $K\subset\overline{\mathbf{Q}}$}
(i)  Let  $S(\overline{\mathbf{Q}})$ is an avatar of the quaternion algebra 
$\left({a,b\over K}\right)$ constructed in Part II.
It follows from formulas (\ref{eq3.9})-(\ref{eq3.11}) that there exists 
a regular map:
%**********************************************************
\begin{equation}\label{eq3.12}
f:  S(\overline{\mathbf{Q}})\to P^2(k).
\end{equation}
%**********************************************************

\bigskip
(ii)  Let $f^{-1}(P^1(\mathbf{C})\cup P^1(\mathbf{C})\cup P^1(\mathbf{C}))$
be the pre-image of the three knotted two-dimensional spheres embedded in $P^2(\mathbf{C})$
under the map (\ref{eq3.12}).  It is not hard to see, that such a pre-image 
consists again of  three  spheres $P^1(\mathbf{C})\cup P^1(\mathbf{C})\cup P^1(\mathbf{C})$ but knotted differently when compared  
to the case of the surface  $P^2(k)$. 

\bigskip
(iii)  Since our surface is an avatar of the quaternion algebra 
$\left({a,b\over K}\right)$, it  is defined over $\overline{\mathbf{Q}}$, see item (ii) 
of theorem \ref{thm1.4}.  This argument finishes the proof of item (iii) 
of theorem \ref{thm1.4}.

\bigskip
Theorem \ref{thm1.4} is proved.

%**************************************************************************
\section{Belyi's theorem for algebraic surfaces}
%***************************************************************************
The aim of this section is an analog of Belyi's theorem for algebraic surfaces. 
Our approach is geometric and  follows from  theorem \ref{thm1.4}. 
We refer the reader to  [Gonz\'alez-Diez 2008] \cite{Gon1}
for an analytic treatment to this problem. 
%****************************************************************
\begin{theorem}\label{thm4.1}
A non-singular algebraic surface is defined over a number field $K$
if and only if it is a covering  of the complex projective plane $P^2(\mathbf{C}) $
ramified at three knotted two-dimensional spheres
$P^1(\mathbf{C})\cup P^1(\mathbf{C})\cup P^1(\mathbf{C})$.
\end{theorem}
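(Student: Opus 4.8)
The plan is to deduce Theorem \ref{thm4.1} from Theorem \ref{thm1.4} by upgrading the avatar construction $\left({a,b\over K}\right)\mapsto S(\overline{\mathbf{Q}})$ of Part II to an essentially surjective correspondence, exactly as the remark following Theorem \ref{thm1.2} does for algebraic curves. Both implications of the ``if and only if'' then become routine translations through this correspondence, with the genuine work concentrated in the surjectivity.

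For the ``only if'' implication, suppose $S$ is a non-singular algebraic surface defined over a number field $K\subset\overline{\mathbf{Q}}$. First I would invoke the arithmetic topology dictionary of \cite[Theorem 7.2.1]{N}, quoted in the remark after Theorem \ref{thm1.4}, to attach to the smooth $4$-manifold underlying $S$ a cyclic division algebra; presenting it as a quaternion algebra $\left({a,b\over K}\right)$ with $a,b\in K^{\times}$, the positive-definiteness of the norm form \eqref{eq2.2} needed for the division property matches the non-singularity of $S$, and one checks (as in the remark that $k\ne\mathbf{Q}$) that $K$ is a genuine, non-trivial number field. Part II of the proof of Theorem \ref{thm1.4} then exhibits $S$ as the avatar $S(\overline{\mathbf{Q}})$ of $\left({a,b\over K}\right)$, whence item (iii) of Theorem \ref{thm1.4} supplies the covering $S\to P^2(\mathbf{C})$ ramified at the three knotted spheres $P^1(\mathbf{C})\cup P^1(\mathbf{C})\cup P^1(\mathbf{C})$.

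For the ``if'' implication, suppose $S$ admits a covering $S\to P^2(\mathbf{C})$ ramified at three knotted copies of $P^1(\mathbf{C})$. Since $P^2(\mathbf{C})$ is the avatar of $\mathbb{H}(\mathbf{Q})$ by Corollary \ref{cor3.5}, I would read the construction of Part III in reverse: the Piergallini branched-covering description forces the covering to factor through intermediate rational surfaces $P^2(k_i)$ as in Figure 4, the deck groups $Gal~(k_i|\mathbf{Q})\cong\mathbf{Z}/2\mathbf{Z}$ assemble into the Galois group of a number field $k\cong\mathbf{Q}(\sqrt{-1},\sqrt{-a},\sqrt{-b})$, and the resulting algebraic numbers $a,b$ determine a quaternion division ring $\left({a,b\over K}\right)$ over $K\cong\mathbf{Q}(a,b)$. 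By construction $S$ is the avatar of this division ring, so item (ii) of Theorem \ref{thm1.4} shows $S$ is defined over $\overline{\mathbf{Q}}$; being of finite type, $S$ is already defined over a number field $K\subset\overline{\mathbf{Q}}$.

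I expect the essential surjectivity in the ``only if'' direction to be the main obstacle: Theorem \ref{thm1.4} builds the surface $S(\overline{\mathbf{Q}})$ out of a division ring, but a priori only special surfaces --- the conic \eqref{eq2.4} and the complete intersections \eqref{eq3.11} obtained from it by base change --- are known to be avatars, whereas Theorem \ref{thm4.1} asserts that \emph{every} non-singular surface over a number field is one. Making this rigorous requires the full strength of the arithmetic-topology correspondence of \cite[Section 7.2]{N} identifying the diffeomorphism type of the underlying $4$-manifold with the isomorphism class of a cyclic division algebra, combined with the classification of quaternion algebras by conic surfaces in Theorem \ref{thm2.3} to pin down $a$ and $b$; matching the field of definition of $S$ with the base field $K$ of the algebra is the delicate point, just as in \cite[Section 4]{Nik1} for curves.
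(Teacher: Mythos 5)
Your overall architecture coincides with the paper's: both reduce Theorem \ref{thm4.1} to the claim that \emph{every} non-singular algebraic surface $S(\overline{\mathbf{Q}})$ is the avatar of some quaternion algebra $\left({a,b\over K}\right)$ --- this is precisely the paper's Lemma \ref{lm4.2} --- and then invoke item (iii) of Theorem \ref{thm1.4}. You correctly identify this surjectivity as the crux. Where you diverge is in how it is to be established. You propose routing it through the arithmetic-topology dictionary \cite[Theorem 7.2.1]{N}, attaching a cyclic division algebra to the underlying $4$-manifold of $S$; in the paper that reference appears only in a motivating remark and plays no role in the actual proof. The paper's Lemma \ref{lm4.2} instead argues directly with coverings: Piergallini's theorem gives a branched covering $\phi\colon S(\overline{\mathbf{Q}})\to P^2(\mathbf{C})$; the inclusion $\mathbf{Q}\subset\overline{\mathbf{Q}}$ gives a covering $\psi_0\colon S_0(\overline{\mathbf{Q}})\to P^2(\mathbf{C})$, where $S_0(\overline{\mathbf{Q}})$ is the avatar of $\left({-1,-1\over \overline{\mathbf{Q}}}\right)$; closing the commutative diagram produces a finite covering $\psi\colon S(\overline{\mathbf{Q}})\to S_0(\overline{\mathbf{Q}})$; and the explicit equations (\ref{eq3.9})--(\ref{eq3.11}) are then invoked to conclude that every finite covering of $S_0(\overline{\mathbf{Q}})$ is the avatar of some $\left({a,b\over K}\right)$. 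Relative to that, your argument for the key step remains a programme rather than a proof, as you yourself acknowledge: to use \cite[Theorem 7.2.1]{N} you would still have to show that the cyclic division algebra it produces is a quaternion algebra over the \emph{given} field of definition $K$, and that its avatar in the sense of Definition \ref{def1.1} (i.e., via the Serre $C^*$-algebra homomorphism) recovers $S$ itself; neither is automatic, and the paper's diagram chase through $S_0(\overline{\mathbf{Q}})$ is designed exactly to extract $K$ and the pair $a,b$ from the covering data rather than from the diffeomorphism type of the $4$-manifold alone. On the other hand, your treatment of the ``if'' direction --- reverse-engineering $a$ and $b$ from the tower of intermediate surfaces $P^2(k_i)$ and the deck groups $Gal~(k_i|\mathbf{Q})$ --- is more explicit than the paper's, which disposes of both implications with the single sentence combining Lemma \ref{lm4.2} and Theorem \ref{thm1.4}(iii).
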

%****************************************************************
\begin{proof}
Our proof is based on item (iii) of  theorem \ref{thm1.4} and the following lemma.
%*******************************************************************
\begin{lemma}\label{lm4.2}
For each algebraic surface $S(\overline{\mathbf{Q}})$
 there exists a quaternion algebra  $\left({a,b\over K}\right)$
 such that the avatar of $\left({a,b\over K}\right)$ is isomorphic to  $S(\overline{\mathbf{Q}})$. 
\end{lemma}
%********************************************************************
\begin{proof}
(i) In view of [Piergallini 1995] \cite{Pie1} (see proof of corollary \ref{cor3.5}), 
the algebraic surface  $S(\overline{\mathbf{Q}})$ is a covering of the 
projective plane $P^2(\mathbf{C}) $ ramified over a knotted 2-dimensional surface  $X\subset P^2(\mathbf{C})$. 
In particular, there exists a regular map $\phi: S(\overline{\mathbf{Q}})\to  P^2(\mathbf{C}) $. 

\bigskip
(ii) Recall that the  $ P^2(\mathbf{C}) $ is an avatar  of the division ring $\left({-1,-1\over \mathbf{Q}}\right)$,
see item (i) of theorem \ref{thm1.4}.  Moreover, the field  inclusion
$\mathbf{Q}\subset\overline{\mathbf{Q}}$ gives rise to a regular map 
$\psi_0: S_0(\overline{\mathbf{Q}})\to  P^2(\mathbf{C}) $, where   $S_0(\overline{\mathbf{Q}})$
is an avatar of the quaternion algebra $\left({-1, -1\over \overline{\mathbf{Q}}}\right)$.

%*******************************************************************
\begin{figure}[h]
%*******************************************************************
\begin{picture}(300,140)(-70,0)

\put(35,100){$S(\overline{\mathbf{Q}})$}
\put(90,100){$S_0(\overline{\mathbf{Q}})$}

\put(65,103){\vector(1,0){20}}
\put(50,90){\vector(0,-1){70}}
\put(105,90){\vector(0,-1){20}}

\put(100,90){\vector(-2,-3){40}}

\put(85,50){ $\left({-1, -1\over K}\right)$}

\put(105,40){\vector(0,-1){20}}

\put(30,0){$P^2(\mathbf{C})$}
\put(90,0){$\left({-1,-1\over \mathbf{Q}}\right)$}
\put(65,3){\vector(1,0){20}}

\put(35,50){$\phi$}
\put(70,110){$\psi$}
\put(70,70){$\psi_0$}
\end{picture}
%***********************************************************
\caption{%Holomorphic map $w$.
}
\end{figure}
%*******************************************************************

\bigskip
(iii)  One gets a regular map $\psi:  S(\overline{\mathbf{Q}})\to S_0(\overline{\mathbf{Q}})$
by closing arrows of the commutative diagram in Figure 5. Notice that $\psi$ is a finite covering of 
the surface $S_0(\overline{\mathbf{Q}})$, since $\phi$ and $\psi_0$ are mappings of finite degree.

\bigskip
(iv) On the other hand, it follows from  equations (\ref{eq3.9})-(\ref{eq3.11}) 
that each finite covering   $S(\overline{\mathbf{Q}})$ of 
the avatar $S_0(\overline{\mathbf{Q}})$
of the algebra  $\left({-1, -1\over K}\right)$ must 
be  avatar of an algebra  $\left({a, b\over K}\right)$
for some $a,b\in K$. Thus  there exists
a quaternion algebra   $\left({a, b\over K} \right)$,
such that  algebraic surface $S(\overline{\mathbf{Q}})$ is the 
avatar of  $\left({a, b\over K}\right)$.

Lemma \ref{lm4.2} is proved. 
\end{proof}

\bigskip
Returning to the proof of theorem \ref{thm4.1},  one combines lemma \ref{lm4.2} with the conclusion of 
item (iii) of theorem \ref{thm1.4}.  This argument finishes the proof of theorem \ref{thm4.1}. 
\end{proof}

\bibliographystyle{amsplain}

%**********************************************************

\end{document}